\newtheorem{theorem}{Theorem}[section]
\newtheorem{lemma}[theorem]{Lemma}
\newtheorem{corollary}[theorem]{Corollary}
\theoremstyle{definition}
\theoremstyle{remark}
\newtheorem{remark}[theorem]{Remark}
\numberwithin{equation}{section}
\def\M{\overline{M}}
\def\g{\overline{g}}
\def\c{\overline{\nabla }}
\def\X{\boldsymbol{X}}
\def\H{\boldsymbol{H}}
\def\L{\mathcal{L}}
\def\B{\boldsymbol{B}}
\def\R{\overline{R}}
\def\Ric{\mathrm{Ric}}
\def\Hess{\mathrm{Hess\,}}
\def\K{\overline{K}}
\def\x{\boldsymbol{x}}
\def\T{\boldsymbol{T}}
\begin{document}

\title[Gradient estimates on conformal solitons]{Gradient estimates for positive eigenfunctions of $\L$-operator on conformal solitons and their applications}


\author[G. Zhao]{Guangwen Zhao}
\address{Department of Mathematics, School of Sciences, Wuhan University of Technology, Wuhan 430070, China}
\curraddr{}
\email{gwzhao@whut.edu.cn}
\thanks{This work is partially supported by the National Natural Science Foundation of China (12001410).}


\subjclass[2020]{53C40, 35B45}

\keywords{Conformal soliton, Gradient estimate, Liouville type theorem}


\dedicatory{}

\begin{abstract}
  We prove a local gradient estimate for positive eigenfunctions of $\L$-operator on conformal solitons given by a general conformal vector field. As an application, we obtain a Liouville type theorem for $\L u=0$, which improves the one of Li--Sun (Acta Math. Sin. (Engl. Ser.), 37(11):1768–1782, 2021.). We also consider applications where manifolds are special conformal solitons. Especially in the case of self-shrinkers, a better Liouville type theorem is obtained.
\end{abstract}

\maketitle

\tableofcontents

\section{Introduction} 
\label{sec:introduction}

Let $M^n$ be a smooth manifold of dimension $n$ and $(\M^{n+p},\g)$ a smooth Riemannian manifold of dimension $n+p$. A family of immersions
\[ 
  F_t:M\to \M
\]
is called a mean curvature flow of $M$ in $\M$ if it satisfies
\[
  \pdv{t}F_t=\H_t,
\]
where $\H_t$ is the mean curvature vector of $F_t(M)$ in $\M$.

Following K. Smoczyk \cite{Smoczyk2001}, we call $M$ is a \textit{conformal soliton} to the mean curvature flow if 
\begin{equation}\label{eq-a}
  \H=\X^N
\end{equation}
for a conformal vector field $\X$, where $\H$ is the mean curvature vector of $M$ in $\M$ and $\X^N$ is the normal projection of $\X$ to the normal bundle of $M$ in $\M$. Recall that a smooth vector field $\X$ on $\M$ is called \textit{conformal} if in local coordinate, $\c_j\X_i+\c_i\X_j=\lambda \g_{ij}$ for some smooth function $\lambda $ on $\M$. In other word, a conformal vector field $\X$ is a smooth vector field which satisfies 
\begin{equation}\label{eq-b}
  L_{\X}\g=\lambda \g
\end{equation}
for some smooth function $\lambda $ on $\M$, where $L$ is the Lie derivative.

Let $F:M\to (\M,\g)$ be a conformal soliton. We denote by $g=F^*\g$ the induced metric on $M$, which makes $F:(M,g)\to (\M,\g)$ is a isometric immersion. Let $\nabla $ and $\c$ be the Levi--Civita connections on $M$ and $\M$, respectively. There is an important elliptic operator on $M$ as follows:
\begin{equation}\label{eq-c}
  \L\star =\Delta\star +\langle \X,\nabla \star \rangle_{\g},
\end{equation}
where $\Delta $ and $\nabla $ is the Laplacian and the gradient operator on $M$. Through this paper, we call this elliptic operator the $\L$-operator for convenience. In\cite{Li2021}, X. Li and J. Sun considered the special conformal vector field $\X$ satisfies
\[
  \c_j\X_i=\lambda \g_{ij},
\]
which can be written as $\X=\c \omega $ for some smooth function $\omega $ according to Poincar\'e lemma, as long as $\M$ is simply connected. They derived gradient estimates for the positive solutions of $\L u=0$ and $\L u=\pdv{u}{t}$. In their setting, the $\L$-operator can be written as $\L\star =\Delta \star +\langle \c \omega,\nabla \star \rangle_{\g}$, where $\c$ is the gradient operator on $\M$.

In this paper, we focus on the general case and consider conformal solitons given by a general conformal vector field of the form \eqref{eq-b}, and study gradient estimates of the eigenfunctions of $\L$-operator. In precise, we will study gradient estimates for positive solution of the linear elliptic equation
\begin{equation}\label{eq-h}
  \L u=-\mu u,
\end{equation}
where $\mu$ is a constant. Throughout this paper we use $V=\X^T$ to represent the tangential projection of $\X$ to the tangent bundle of $M$, and accordingly the $\L$-operator can be viewed the $V$-Laplacian
\begin{equation*}
  \L\star =\Delta\star +\langle \X^T,\nabla \star \rangle_{\g}=\Delta \star +\langle V,\nabla \star \rangle_g=:\Delta_V\star ,
\end{equation*}
which is a generalized Laplacian introduced by Q. Chen, J. Jost and G. Wang \cite{Chen2015}. As in \cite{Chen2012,Wei2009}, the Bakry--\'Emery Ricci tensor are defined by
\[
  \Ric_V=\Ric -\frac{1}{2}L_Vg,
\]
where $\Ric $ is the Ricci tensor, and $L$ is the Lie derivative. 

In the sequel, we say that $M$ is a conformal soliton in $(\M,\g)$, meaning that $M$ is a conformal soliton given by a conformal vector field of the form \eqref{eq-b} on $\M$. Our first main result is the following:
\begin{theorem}[Gradient estimate]\label{thm-a}
  Let $M^n$ be a conformal soliton in $(\M,\g)$. Suppose that $\K\ge -K_0$, $|\B|\le B_0$, $|\X^T|\le V_0$ and $\lambda \le \lambda_0$, where $K_0,B_0$ and $V_0$ are nonnegative constants. Let $u$ be a positive solution to \eqref{eq-h}. Then there exists a positive constant $C$ depending on $n,\mu , a, K_0, B_0, V_0$ and $\lambda_0$, such taht
  \begin{equation}\label{eq-w}
    \sup_{B_{a/2}(p)}\frac{|\nabla u|}{u}\le C.
  \end{equation}

  More explicitly, for any $\alpha >0$, we have
  \begin{equation}\label{eq-v}
    \sup_{B_{a/2}(p)}\frac{|\nabla u|^2}{u^2}\le \widetilde{C}\left(\Lambda_0+|\Lambda_0|+|\mu |+\frac{\sqrt{K_0}+B_0+V_0}{a}+\frac{1}{a^2}\right),
  \end{equation}
  where $\widetilde{C}$ is a positive constant depending on $n$ and $\alpha $, and
  \begin{align*}
    \Lambda_0=(n-1)K_0+B_0^2+\frac{V_0^2}{\alpha (n-1)}+\frac{\lambda_0}{2}.
  \end{align*}
\end{theorem}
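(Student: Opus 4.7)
The plan is to adapt the Li--Yau maximum principle technique to the drift Laplacian $\Delta_V$, with the conformal soliton structure feeding in through a lower bound on the Bakry--\'Emery Ricci tensor $\Ric_V$. Set $f=\log u$; then $\L u=-\mu u$ rewrites as $\Delta_V f=-\mu-|\nabla f|^2$, so with $w:=|\nabla f|^2=|\nabla u|^2/u^2$ it suffices to bound $w$ from above on $B_{a/2}(p)$.

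The first key ingredient is the $V$-Bochner identity
\[
  \tfrac12\Delta_V w=|\Hess f|^2-\langle\nabla f,\nabla w\rangle+\Ric_V(\nabla f,\nabla f),
\]
together with a lower bound on $\Ric_V$ that exploits the soliton hypothesis. By the Gauss equation,
\[
  \Ric(Y,Y)\ge-[(n-1)K_0+B_0^2]|Y|^2+\langle\B(Y,Y),\H\rangle,
\]
while decomposing $\c_Y\X$ via the Gauss/Weingarten formulas and invoking the conformal identity $L_\X\g=\lambda\g$ gives
\[
  \tfrac12 L_Vg(Y,Y)=\tfrac{\lambda}{2}|Y|^2+\langle\B(Y,Y),\H\rangle,
\]
where the second term arises precisely because $\H=\X^N$. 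Subtracting, the indefinite $\langle\B,\H\rangle$ pieces cancel and $\Ric_V(\nabla f,\nabla f)\ge-[(n-1)K_0+B_0^2+\lambda_0/2]\,w$. This cancellation is the reason the estimate extends to general conformal vector fields rather than only the gradient case of \cite{Li2021}.

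For the Hessian square I would use $|\Hess f|^2\ge(\Delta f)^2/n$ with $\Delta f=-\mu-w-\langle V,\nabla f\rangle$, and apply the Young-type inequality $(A+B)^2\ge A^2/(1+\delta)-B^2/\delta$ with $\delta=\alpha(n-1)/n$, so that
\[
  |\Hess f|^2\ge\frac{(\mu+w)^2}{n+\alpha(n-1)}-\frac{V_0^2}{\alpha(n-1)}\,w;
\]
this yields exactly the $V_0^2/[\alpha(n-1)]$ summand in $\Lambda_0$. Assembling everything gives a differential inequality schematically of the shape
\[
  \tfrac12\Delta_V w\ge\frac{(\mu+w)^2}{n+\alpha(n-1)}-\langle\nabla f,\nabla w\rangle-\Lambda_0\,w.
\]
Next one inserts a standard cutoff $\eta\in C^\infty_c(B_a(p))$ with $\eta\equiv 1$ on $B_{a/2}(p)$ and $|\nabla\eta|^2/\eta\le C/a^2$, using the $\Delta_V$-Laplacian comparison theorem (available because $\Ric_V$ is bounded below) to control $\Delta\eta$, and $|\langle V,\nabla\eta\rangle|\le V_0|\nabla\eta|$ to handle the drift---this is where the $V_0/a$ term in \eqref{eq-v} enters. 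Evaluating $\Delta_V(\eta w)\le 0$ and $\nabla(\eta w)=0$ at an interior maximum $q\in B_a(p)$ collapses the Bochner inequality into a quadratic in $(\eta w)(q)$; taking its positive root yields \eqref{eq-v}, and \eqref{eq-w} then follows by fixing $\alpha$ and absorbing constants.

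The main obstacle is the double bookkeeping of the drift field $V$: it intervenes once in the split of $(\Delta f)^2$, forcing the $V_0^2/[\alpha(n-1)]$ correction in $\Lambda_0$, and once in $\Delta_V\eta$, contributing the $V_0/a$ term in \eqref{eq-v}. Keeping the $\alpha$-dependence confined to the first of these, together with verifying the Gauss-equation cancellation of the $\langle\B,\H\rangle$ contributions that keeps $\Ric_V$ only mildly negative, is the computational and conceptual heart of the argument.
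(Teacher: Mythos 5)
Your proposal is correct and follows essentially the same route as the paper: the structural key---that the $\langle\B,\H\rangle$ terms coming from the Gauss equation and from $L_Vg=L_{\X}\g+2\langle\H,\B\rangle$ cancel in $\Ric_V$, leaving the bound $\Ric_V\ge-((n-1)K_0+B_0^2+\lambda_0/2)$---is exactly the computation of Section~\ref{sec:preliminaries}, and the remainder is the same Bochner--cutoff--maximum-principle argument, with the ordinary Laplacian comparison (from $\Ric\ge-(n-1)K_0-2B_0^2$) plus $|V|\le V_0$ handling the drift term and producing the $(\sqrt{K_0}+B_0+V_0)/a$ contribution. The only differences are cosmetic: the paper works with $\phi=|\nabla u|/u$ and the cutoff $a^2-r^2$, sharpening $|\Hess u|^2$ in normal coordinates to get $(n-1)$ and an auxiliary parameter $\beta$ in the leading denominator, whereas you work with $w=\phi^2$, a smooth cutoff, and $|\Hess f|^2\ge(\Delta f)^2/n$; both yield \eqref{eq-v} with a constant depending only on $n$ and $\alpha$.
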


By letting $\mu =0$ and taking $a\to \infty $ in estimate \eqref{eq-v}, we immediately get the following Liouville type theorem for $\L u=0$.
\begin{corollary}[Liouville type theorem]\label{cor-a}
  Let $M^n$ be a complete noncompact conformal soliton in $(\M,\g)$. Suppose that $\K\ge -K_0$, $|\B|\le B_0$, $|\X^T|\le V_0$ and $\lambda \le \lambda_0$, where $K_0,B_0$ and $V_0$ are nonnegative constants. If there is a positive number $\alpha $ such that $(n-1)K_0+B_0^2+\frac{V_0^2}{\alpha (n-1)}+\frac{\lambda_0}{2}\le 0$ , then any positive or bounded solution to $\L u=0$ must be constant.
\end{corollary}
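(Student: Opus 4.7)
The plan is to derive Corollary \ref{cor-a} directly from Theorem \ref{thm-a} by specializing to $\mu=0$ and letting the ball radius tend to infinity. This is the standard Yau-type route from a local gradient bound to a Liouville theorem, so the substantive work has already been done in Theorem \ref{thm-a}; the corollary essentially records the consequence of the sign choice $\Lambda_0\le 0$.

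First I would handle the case of a positive solution. Fix $p\in M$; since $M$ is complete and noncompact, the geodesic ball $B_{a/2}(p)$ is well-defined for every $a>0$. Applying the explicit estimate \eqref{eq-v} with $\mu=0$ and with the specific $\alpha>0$ furnished by the hypothesis yields
\begin{equation*}
  \frac{|\nabla u|^2(p)}{u^2(p)}\le \widetilde{C}\left(\Lambda_0+|\Lambda_0|+\frac{\sqrt{K_0}+B_0+V_0}{a}+\frac{1}{a^2}\right).
\end{equation*}
The hypothesis $\Lambda_0\le 0$ forces $\Lambda_0+|\Lambda_0|=0$, and the remaining $O(1/a)$ terms vanish as $a\to\infty$. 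Hence $|\nabla u|(p)=0$; since $p$ is arbitrary, $u$ is constant.

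Next I would reduce the bounded case to the positive case. If $u$ is any bounded solution of $\L u=0$, set $C_0=\sup_M u<\infty$ and put $v=C_0-u+1$. Then $v\ge 1>0$ on $M$. Because the definition \eqref{eq-c} of $\L$ has no zeroth-order term, $\L$ annihilates constants, so $\L v=-\L u=0$. The positive case applied to $v$ then forces $v$, and hence $u$, to be constant.

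There is no real obstacle here: once Theorem \ref{thm-a} is granted, the argument is essentially a one-line $a\to\infty$ limit. The only points requiring care are the reduction trick $u\mapsto C_0-u+1$ in the bounded case, which relies on $\L$ killing constants and on the finiteness of $\sup_M u$, and the use of completeness of $M$ to guarantee that geodesic balls of arbitrarily large radius about the chosen point $p$ actually exist.
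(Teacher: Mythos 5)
Your proposal is correct and follows the same route as the paper, which simply observes that the corollary follows ``by letting $\mu=0$ and taking $a\to\infty$ in estimate \eqref{eq-v}'' (so that $\Lambda_0+|\Lambda_0|=0$ under the sign hypothesis). Your explicit treatment of the bounded case via $v=C_0-u+1$ fills in a step the paper leaves implicit, and it is valid since $\L$ annihilates constants.
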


\begin{remark}
  Our Corollary~\ref{cor-a} improves the corresponding result obtained by X. Li and J. Sun in \cite{Li2021}. In fact, they obtained a Liouville type theorem for positive or bounded solutions of $\L u=0$ on a conformal soliton given by a special conformal vector field of the form $\c_j\X_i=\lambda \g_{ij}$, under the assumptions of $\K\ge -K_0,\ |\B|\le c,\ |\nabla \omega |\le k,\ \overline{\Hess}\omega \le m\g $, and $(n-1)K_0+c^2+\frac{k^2}{\alpha (n-1)}+m= 0$, where $\omega $ is a smooth function satisfies $\c \omega =\X$.
\end{remark}

As another application of Theorem~\ref{thm-a}, we can also prove a Harnack type inequality for positive eigenfunctions of $\L$-operator.
\begin{corollary}[Harnack type inequality]\label{cor-b}
  The conditions are the same as in Theorem~\ref{thm-a}. Then there exists a positive constant $\widehat{C}$ depending on $n,\mu , a, K_0, B_0, V_0$ and $\lambda_0$, such that
  \[
  \sup_{B_{a/2}(p)}u\le \widehat{C}\inf_{B_{a/2}(p)}u.
  \]
\end{corollary}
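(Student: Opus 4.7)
The plan is to deduce the Harnack inequality directly from the gradient estimate in Theorem~\ref{thm-a} by integrating $|\nabla \log u|$ along suitably chosen paths inside $B_{a/2}(p)$. Since $u>0$, I would set $v=\log u$, so that $|\nabla v|=|\nabla u|/u$, and Theorem~\ref{thm-a} provides a constant $C=C(n,\mu,a,K_0,B_0,V_0,\lambda_0)>0$ with $|\nabla v|\le C$ throughout $B_{a/2}(p)$.

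For any two points $x,y\in B_{a/2}(p)$, I would join them through the center $p$ by minimizing unit-speed geodesics $\gamma_1\colon[0,d(p,x)]\to M$ from $p$ to $x$ and $\gamma_2\colon[0,d(p,y)]\to M$ from $p$ to $y$. Each such geodesic has length strictly less than $a/2$, and for any $t$ in its domain one has $d(p,\gamma_i(t))\le t<a/2$, so $\gamma_i$ stays inside $B_{a/2}(p)$. Consequently the bound $|\nabla v|\le C$ is valid all along both geodesics, and integrating yields
\[
  |v(x)-v(p)|\le C\,d(p,x)<Ca/2,\qquad |v(y)-v(p)|\le C\,d(p,y)<Ca/2.
\]
The triangle inequality then gives $|v(x)-v(y)|<Ca$, and exponentiating produces $u(x)\le e^{Ca}\,u(y)$. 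Taking the supremum in $x$ and the infimum in $y$ over $B_{a/2}(p)$ establishes the claim with $\widehat{C}=e^{Ca}$, which depends only on the parameters already appearing in Theorem~\ref{thm-a}.

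There is no genuine obstacle here; the proof is a routine consequence of the gradient estimate. The only point requiring a little care is that the paths used in the integration must lie inside the ball on which the gradient estimate holds, and routing through the center $p$ via minimizing geodesics accomplishes this automatically, without any need to enlarge the ball or to invoke a separate cut locus argument.
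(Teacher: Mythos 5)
Your proof is correct, and it follows the same basic strategy as the paper: integrate the bound on $|\nabla u|/u$ from Theorem~\ref{thm-a} along geodesics and exponentiate. The one genuine difference is the choice of paths. The paper picks the points where $u$ attains its supremum and infimum on $B_{a/2}(p)$ and joins them by a single minimizing geodesic; that geodesic can leave $B_{a/2}(p)$, so the paper must observe (via the triangle inequality) that it stays inside the larger ball $B_a(p)$ and then invoke the gradient estimate on $B_a(p)$ rather than on $B_{a/2}(p)$ as literally stated in the theorem, ending up with $\widehat{C}=e^{2Ca}$. Your routing through the center $p$ keeps both geodesic segments inside $B_{a/2}(p)$ (since $d(p,\gamma_i(t))\le t<a/2$ along a minimizing geodesic issued from $p$), so you only need the estimate exactly as stated in Theorem~\ref{thm-a}, at the cost of a detour of total length at most $a$ instead of the direct distance; you get $\widehat{C}=e^{Ca}$. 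Your version of taking $\sup$ and $\inf$ only at the end also sidesteps the (minor) question of whether the extrema are attained. Both arguments implicitly use completeness (or relative compactness of the relevant ball) to guarantee the existence of minimizing geodesics, so neither has an advantage there.
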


Finally, we consider conformal solitons given by some special conformal vector fields in Euclidean space. When the ambient manifold $(\M,\g)$ is the Euclidean space $(\mathbb{R}^{n+1},\delta )$, it is easy to know that the position vector $\x$ and any constant vector $\T$ are conformal. Conformal solitons defined by $\x$ and $\T$ are self-shrinkers and translating solitons, respectively. 

In the case $\X=-\frac{1}{2}\x$, a simple calculation yields $\L_{\X}\delta =-\frac{1}{2}L_{\x}\delta =-\delta $, which is $\lambda \equiv -1$ in \eqref{eq-b}. It follows from Theorem~\ref{thm-a} that
\begin{corollary}
  Let $\x:M^n\to \mathbb{R}^{n+1}$ be a complete self-shrinker, which satisfies $\H=-\frac{1}{2}\x^N$. Suppose that $|\B|\le B_0$ and $|\x^T|\le v_0$, where $B_0$ and $v_0$ are nonnegative constants. Let $u$ be a positive solution to $\Delta u-\frac{1}{2}\langle \x,\nabla u\rangle_\delta =-\mu u$. Then for any $\alpha >0$, there exists a positive constant $C_1$ depending on $n$ and $\alpha $, such that
  \[
  \sup_{B_{a/2}(p)}\frac{|\nabla u|^2}{u^2}\le C_1\left(\max\left\{B_0^2+\frac{v_0^2}{4\alpha (n-1)}-\frac{1}{2},0\right\}+|\mu |+\frac{B_0+v_0}{a}+\frac{1}{a^2}\right).
  \]
\end{corollary}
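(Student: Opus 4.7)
The plan is to read this corollary off as a direct specialization of Theorem~\ref{thm-a} to the self-shrinker setting in Euclidean space, so my task reduces to identifying the appropriate values of the constants $K_0, B_0, V_0, \lambda_0$ and then rearranging the resulting bound \eqref{eq-v} into the form stated.

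First I would fix the conformal vector field $\X = -\tfrac{1}{2}\x$ on $(\mathbb{R}^{n+1}, \delta)$. Because $\delta$ is flat I may take $K_0 = 0$. From $L_{\x}\delta = 2\delta$ I compute $L_{\X}\delta = -\delta$, so $\lambda \equiv -1$ in \eqref{eq-b} and I can take $\lambda_0 = -1$. The assumption $|\B| \le B_0$ carries over directly. Since $|\X^T| = \tfrac{1}{2}|\x^T| \le v_0/2$, the admissible constant for Theorem~\ref{thm-a} is $V_0 = v_0/2$. It is also immediate that $\L u = -\mu u$ becomes $\Delta u - \tfrac{1}{2}\langle \x, \nabla u\rangle_\delta = -\mu u$ under this choice of $\X$.

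Next I would substitute these values into the expression for $\Lambda_0$ given in Theorem~\ref{thm-a}, obtaining
\[
  \Lambda_0 = B_0^2 + \frac{v_0^2}{4\alpha(n-1)} - \frac{1}{2}.
\]
The identity $\Lambda_0 + |\Lambda_0| = 2\max\{\Lambda_0, 0\}$ is what produces the $\max$-term of the target inequality; this is the one place any care is needed, since $\lambda_0 = -1$ allows $\Lambda_0$ to be negative. The remaining term $(\sqrt{K_0} + B_0 + V_0)/a$ in \eqref{eq-v} collapses to $(B_0 + v_0/2)/a \le (B_0 + v_0)/a$.

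There is no genuine obstacle: the whole argument is bookkeeping around Theorem~\ref{thm-a}, and absorbing the numerical multipliers into a single constant $C_1 = C_1(n,\alpha)$ produces the stated estimate.
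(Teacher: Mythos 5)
Your proposal is correct and is exactly the paper's (implicit) argument: the paper likewise obtains this corollary by specializing Theorem~\ref{thm-a} with $K_0=0$, $\lambda_0=-1$, $V_0=v_0/2$, and absorbing $\Lambda_0+|\Lambda_0|=2\max\{\Lambda_0,0\}$ into the constant. Your bookkeeping of $\Lambda_0=B_0^2+\frac{v_0^2}{4\alpha(n-1)}-\frac12$ and of the $(B_0+v_0/2)/a\le(B_0+v_0)/a$ term matches the stated bound.
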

We also have the following Liouville type theorem, which can be found in \cite{Zhu2018}, with the slight difference that our conditions here are more relaxed.
\begin{corollary}
  Let $\x:M^n\to \mathbb{R}^{n+1}$ be a complete noncompact self-shrinker, which satisfies $\H=-\frac{1}{2}\x^N$. Suppose that $|\B|\le B_0$ and $|\x^T|\le v_0$, where $B_0$ and $v_0$ are nonnegative constants. If there is a positive number $\alpha $ such that $B_0^2+\frac{v_0^2}{4\alpha (n-1)}\le \frac{1}{2}$, then any positive or bounded solution to $\Delta u-\frac{1}{2}\langle \x,\nabla u\rangle_\delta =0$ must be constant.
\end{corollary}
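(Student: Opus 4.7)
The plan is to deduce the Liouville conclusion from the preceding specialized gradient estimate (the self-shrinker corollary just above this one) by letting the ball radius $a\to\infty$, exactly as Corollary~\ref{cor-a} is extracted from Theorem~\ref{thm-a}.

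For the positive case, let $u>0$ solve $\Delta u-\tfrac12\langle\x,\nabla u\rangle_{\delta}=0$ and apply the preceding corollary with $\mu=0$. Under the hypothesis $B_0^2+\tfrac{v_0^2}{4\alpha(n-1)}\le\tfrac12$ the max-term vanishes identically, and the estimate reduces to
\[
  \sup_{B_{a/2}(p)}\frac{|\nabla u|^2}{u^2}\le C_1\!\left(\frac{B_0+v_0}{a}+\frac{1}{a^2}\right)
\]
for every $p\in M$ and every $a>0$, with $C_1=C_1(n,\alpha)$ independent of $a$. Since $M$ is complete and noncompact, fixing $p$ and letting $a\to\infty$ exhausts $M$ by the balls $B_{a/2}(p)$, and the right-hand side tends to $0$. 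Hence $|\nabla u|\equiv 0$ on $M$ and $u$ is constant.

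For a bounded solution $u$ I would reduce to the positive case by a constant shift: pick $C_0$ large enough that $v:=u+C_0>0$ on $M$. Linearity of $\L$ together with $\L(1)=0$ gives $\L v=\L u=0$, and $v$ is a positive bounded solution of the same equation; by the previous step $v$, hence $u$, is constant. The only point requiring verification is that the constant $C_1$ above genuinely depends only on $n$ and $\alpha$ (and in particular not on $a$); this is inherited from the explicit form of $\widetilde C$ in Theorem~\ref{thm-a} and constitutes the sole, and quite mild, obstacle, after which the limit procedure is automatic.
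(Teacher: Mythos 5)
Your proposal is correct and is exactly the argument the paper intends (the paper leaves it implicit, mirroring how Corollary~\ref{cor-a} is extracted from Theorem~\ref{thm-a}): with $\mu=0$ the hypothesis kills the max-term in the preceding self-shrinker gradient estimate, whose constant $C_1$ indeed depends only on $n$ and $\alpha$, so letting $a\to\infty$ forces $\nabla u\equiv 0$. The reduction of the bounded case to the positive case by adding a constant is also the standard step, valid here since $\L$ has no zeroth-order term.
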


In the case $\X=\T$, we have $L_{\X}\delta =0$ and then $\lambda \equiv 0$ in \eqref{eq-b}. It follows from Theorem~\ref{thm-a} that
\begin{corollary}
  Let $\x :M\to \mathbb{R}^{n+1}$ be a complete translating soliton, which satisfies $\H=T^N$. Suppose that $|\B|\le B_0$ and $|\T^T|\le T_0$. Let $u$ be a positive solution to $\Delta u+\langle \T,\nabla u\rangle_\delta =-\mu u$. Then for any $\alpha >0$, there exists a positive constant $C_2$ depending on $n$ and $\alpha $, such that
  \[
  \sup_{B_{a/2}(p)}\frac{|\nabla u|^2}{u^2}\le C_2\left(B_0^2+\frac{T_0^2}{\alpha (n-1)}+|\mu |+\frac{B_0+T_0}{a}+\frac{1}{a^2}\right).
  \]
\end{corollary}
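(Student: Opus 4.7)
The plan is to apply Theorem~\ref{thm-a} directly, since the translating soliton hypothesis is a particular instance of the conformal soliton framework developed earlier; no new analytic argument is needed. First I would identify the geometric constants: the ambient space is $(\mathbb{R}^{n+1},\delta)$, whose sectional curvature vanishes identically, so the lower bound $\K\ge -K_0$ holds with $K_0=0$. The conformal vector field of interest is $\X=\T$, a constant vector field, which is $\c$-parallel and hence Killing; consequently $L_{\T}\delta=0$, and we may take $\lambda_0=0$ in \eqref{eq-b}. The remaining hypotheses of Theorem~\ref{thm-a} on $|\B|$ and $|\X^T|=|\T^T|$ match the assumed bounds $|\B|\le B_0$ and $|\T^T|\le T_0$, so we set $V_0=T_0$. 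Finally, the PDE $\Delta u+\langle \T,\nabla u\rangle_\delta=-\mu u$ is exactly \eqref{eq-h} in this specialization, since $\L u=\Delta u+\langle \X,\nabla u\rangle_\delta=\Delta u+\langle \T,\nabla u\rangle_\delta$ (the normal part of $\T$ pairs trivially with $\nabla u$).

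With these identifications, the next step is to substitute into the formula for $\Lambda_0$ from Theorem~\ref{thm-a}:
\[
  \Lambda_0=(n-1)K_0+B_0^2+\frac{V_0^2}{\alpha (n-1)}+\frac{\lambda_0}{2}=B_0^2+\frac{T_0^2}{\alpha (n-1)}.
\]
This quantity is manifestly nonnegative, so $|\Lambda_0|=\Lambda_0$, and the contribution $\Lambda_0+|\Lambda_0|$ appearing inside the parentheses of \eqref{eq-v} reduces to $2\Lambda_0$. Inserting this together with $K_0=0$ and $V_0=T_0$ into \eqref{eq-v} yields
\[
  \sup_{B_{a/2}(p)}\frac{|\nabla u|^2}{u^2}\le \widetilde{C}\left(2B_0^2+\frac{2T_0^2}{\alpha (n-1)}+|\mu|+\frac{B_0+T_0}{a}+\frac{1}{a^2}\right),
\]
and absorbing the numerical factor $2$ into a new constant $C_2=C_2(n,\alpha)$ gives the claimed bound.

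The main (and essentially only) step requiring comment is the verification that a constant vector field on Euclidean space does fit the conformal vector field framework of \eqref{eq-b}; this is immediate since $\c_j T_i=0$ forces $\lambda\equiv 0$, so $\T$ is in fact Killing and thus trivially conformal. Once this identification is in place, the corollary is a mechanical specialization of Theorem~\ref{thm-a}, with the simplification that the $K_0$ and $\lambda_0$ contributions to $\Lambda_0$ drop out completely in the Euclidean translator setting.
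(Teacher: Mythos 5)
Your proposal is correct and follows exactly the route the paper takes: the paper likewise observes that $L_{\T}\delta=0$ forces $\lambda\equiv 0$, sets $K_0=0$ and $V_0=T_0$, and reads the bound off from the explicit estimate \eqref{eq-v} of Theorem~\ref{thm-a}, with $\Lambda_0+|\Lambda_0|=2\Lambda_0$ since $\Lambda_0=B_0^2+\frac{T_0^2}{\alpha(n-1)}\ge 0$. No further comment is needed.
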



\section{Preliminaries} 
\label{sec:preliminaries}

Let $(M^n,g)$ be a isometric immersed submanifold of $(\M^{n+p},\g)$, and $\nabla $ and $\c$ be their Levi--Civita connections, respectively. Here and in the sequel, we equate a vector field on $M$ with its image in $\M$. For any $X,Y\in \Gamma(TM)$, we have $\nabla_XY=\left(\c_XY\right)^T$, and define the second fundamental form $\B$ of $M$ by $\B(X,Y)=\left(\c_XY\right)^N=\c_XY-\nabla_XY$, which is a symmetric bilinear form on $TM$ with values in $NM$. The mean curvature vector $\H$ of $M$ is defined by $\H=\mathrm{tr}_g\B=\sum_{i=1}^n\B(e_i,e_i)$, where $\{e_i\}_{i=1}^n$ is a local orthonormal frame field of $M$.

Now we compare the Lie derivatives of $g$ and $\g$: for $X,Y\in \Gamma(TM)$, recall that we specify $V=\X^T$, and we have  
\begin{equation}\label{eq-d}
  \begin{split}
    L_Vg(X,Y)
    =&V\langle X,Y\rangle_g -\langle [V,X],Y\rangle_g -\langle X,[V,Y]\rangle_g\\
    =&\langle \nabla_XV,Y\rangle_g+\langle X,\nabla_YV\rangle_g\\
    =&X\langle V,Y\rangle_g-\langle V,\nabla_XY\rangle_g+Y\langle X,V\rangle_g-\langle \nabla_YX,V\rangle_g\\
    =&X\langle \X,Y\rangle_{\g}-\langle \X,\c_XY\rangle_{\g}+\langle \H,\B(X,Y)\rangle_{\g}\\
    &+Y\langle Y,\X\rangle_{\g}-\langle \c_YX,\X\rangle_{\g}+\langle \B(Y,X),\H\rangle_{\g}\\
    =&\langle \c_X\X,Y\rangle_{\g}+\langle X,\c_YX\rangle_{\g}+2\langle \X,\B(X,Y)\rangle_{\g}\\
    =&L_{\X}\g(X,Y)+2\langle \H,\B(X,Y)\rangle_{\g}.
  \end{split}
\end{equation} 
Denote the curvature tensor of $M$ by $R$, which is defined in this paper as $R(X,Y)Z=\nabla_X\nabla_YZ-\nabla_Y\nabla_XZ-\nabla_{[X,Y]}Z$ for $X,Y,Z\in \Gamma(TM)$. Similarly, denote by $\R$ the curvature tensor of $\M$. We have the Gauss equation
\begin{equation*}
  \langle R(X,Y)Z,W\rangle_g=\langle \R(X,Y)Z,W\rangle_{\g}-\langle \B(X,Z),\B(Y,W)\rangle_{\g}+\langle \B(X,W),\B(Y,Z)\rangle_{\g}.
\end{equation*}
Choosing a local orthonormal frame field $\{e_i\}_{i=1}^n$ of $M$, let $Y=Z=e_i$ and sum both sides of the above equation, we obtain
\begin{equation}\label{eq-e}
  \Ric(X,Y)=\sum_{i=1}^n\langle \R(X,e_i)e_i,Y\rangle_{\g}-\sum_{i=1}^n\langle \B(X,e_i),\B(Y,e_i)\rangle_{\g}+\langle \B(X,Y),\H\rangle_{\g},
\end{equation}
where we replaced $W$ with $Y$. From \eqref{eq-d} and \eqref{eq-e}, the Bakry--\'Emery Ricci tensor
\begin{equation}\label{eq-f}
  \begin{split}
    \Ric_V(X,Y)
    =&\Ric(X,Y)-\frac{1}{2}L_Vg(X,Y)\\
    =&\sum_{i=1}^n\langle \R(X,e_i)e_i,Y\rangle_{\g}-\sum_{i=1}^n\langle \B(X,e_i),\B(Y,e_i)\rangle_{\g}-\frac{1}{2}L_{\X}\g(X,Y).
  \end{split}
\end{equation}
So we have 
\begin{align}
  \Ric  &\ge -(n-1)K_0-2B_0^2,\label{eq-k1}\\
  \Ric_V&\ge -\left((n-1)K_0+B_0^2+\frac{\lambda_0 }{2}\right),\label{eq-k}
\end{align}
if the sectional curvature of $\M$ has a lower bound $-K_0$, the norm of the second fundamental form of $M$ has a upper bound $B_0$ and $\lambda \le \lambda_0$ on $\M$. 


\section{Proofs of the main results} 
\label{sec:proof_of_main_results}

To prove Theorem~\ref{thm-a}, we need a lemma. We first list a elementary inequality that will be used in the proof of our lemma. For any $a,b\in \mathbb{R}$ and $\alpha >0$, we have
\begin{equation}\label{eq-g}
  (a+b)^2\ge \frac{a^2}{1+\alpha }-\frac{b^2}{\alpha },
\end{equation}
and equality holds if and only if $b=-\frac{\alpha }{1+\alpha }a$.
\begin{lemma}\label{lem-a}
  The conditions are the same as in Theorem~\ref{thm-a}. Then for $\alpha >0$ and $\beta >0$, we have
  \begin{equation}\label{eq-l}
    \begin{split}
      |\nabla u|\Delta_V|\nabla u|
      \ge &\frac{|\nabla |\nabla u||^2}{(1+\alpha )(1+\beta )(n-1)}-\frac{(\mu u)^2}{\beta (1+\alpha )(n-1)}\\
      &-\left((n-1)K_0+B_0^2+\frac{\lambda_0 }{2}+\mu +\frac{V_0^2}{\alpha (n-1)}\right)|\nabla u|^2.
    \end{split}
  \end{equation}
\end{lemma}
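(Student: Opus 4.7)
The plan is to apply the weighted Bochner formula for $\Delta_V$, plug in the equation $\Delta_V u = -\mu u$, and then refine the standard Cauchy--Schwarz bound on $|\Hess u|^2$ carefully enough to extract a term proportional to $|\nabla|\nabla u||^2$.

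I would start from the $V$-Bochner identity
\begin{equation*}
  \frac{1}{2}\Delta_V|\nabla u|^2 = |\Hess u|^2 + \langle \nabla u, \nabla(\Delta_V u)\rangle_g + \Ric_V(\nabla u,\nabla u),
\end{equation*}
which follows from the classical Bochner formula by computing the extra contribution of $V$; the symmetrization $\nabla_iV_j u^i u^j = \tfrac12(L_Vg)(\nabla u,\nabla u)$ is exactly what converts $\Ric$ into $\Ric_V$. Combining this with the Leibniz-type identity $\tfrac{1}{2}\Delta_V|\nabla u|^2 = |\nabla u|\Delta_V|\nabla u| + |\nabla|\nabla u||^2$ and $\Delta_V u = -\mu u$ produces
\begin{equation*}
  |\nabla u|\Delta_V|\nabla u| = |\Hess u|^2 - |\nabla|\nabla u||^2 - \mu|\nabla u|^2 + \Ric_V(\nabla u,\nabla u).
\end{equation*}
The Ricci term is immediately controlled by \eqref{eq-k}, giving $\Ric_V(\nabla u,\nabla u)\ge -((n-1)K_0+B_0^2+\lambda_0/2)|\nabla u|^2$, so only $|\Hess u|^2$ requires further work.

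The core step is a refined lower bound for $|\Hess u|^2$. At a point with $\nabla u\ne 0$ I would pick a local orthonormal frame $\{e_i\}$ of $M$ with $e_1 = \nabla u/|\nabla u|$. Writing $f = |\nabla u|$, the relations $u_1 = f$ and $u_j = 0$ for $j\ge 2$ at the point yield $f_i = u_{1i}$, so that $f_1 = u_{11}$ and $\sum_{i\ge 2}u_{1i}^2 = |\nabla f|^2 - f_1^2$. Discarding the terms $u_{ij}^2$ with $i,j\ge 2$, $i\ne j$, and applying Cauchy--Schwarz to the remaining diagonal, I would obtain
\begin{equation*}
  |\Hess u|^2 \ge u_{11}^2 + 2\sum_{i\ge 2}u_{1i}^2 + \sum_{i\ge 2}u_{ii}^2 \ge 2|\nabla f|^2 - f_1^2 + \frac{(\Delta u - f_1)^2}{n-1}.
\end{equation*}
Then I would substitute $\Delta u = \Delta_V u - \langle V,\nabla u\rangle_g = -\mu u - \langle V,\nabla u\rangle_g$ into $(\Delta u - f_1)^2$ and apply the elementary inequality \eqref{eq-g} twice: first with $a = -\mu u - f_1$, $b = -\langle V,\nabla u\rangle_g$ to peel off the $V$-contribution (using $|\langle V,\nabla u\rangle_g|\le V_0 f$), and then with $a = f_1$, $b = \mu u$ to isolate $f_1^2$.

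The last step is the bookkeeping that upgrades $f_1^2$ to $|\nabla f|^2$: since $|\nabla f|^2 - f_1^2\ge 0$ and $(1+\alpha)(1+\beta)(n-1)\ge 1$ for $n\ge 2$, one checks directly that
\begin{equation*}
  2|\nabla f|^2 - f_1^2 + \frac{f_1^2}{(1+\alpha)(1+\beta)(n-1)} \ge |\nabla f|^2 + \frac{|\nabla f|^2}{(1+\alpha)(1+\beta)(n-1)}.
\end{equation*}
Substituting this and the Ricci bound back into the identity for $|\nabla u|\Delta_V|\nabla u|$ produces \eqref{eq-l}. I expect the main obstacle to be only the careful double application of \eqref{eq-g} and the replacement of $f_1^2$ by $|\nabla f|^2$; everything else reduces to the $V$-Bochner identity, the curvature estimate \eqref{eq-k}, and the standard diagonal Cauchy--Schwarz on $\Hess u$.
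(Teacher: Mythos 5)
Your proposal is correct and follows essentially the same route as the paper: the $V$-Bochner identity combined with $\Delta_V u=-\mu u$, the adapted frame with $e_1=\nabla u/|\nabla u|$ so that $|\nabla|\nabla u||^2=\sum_j u_{1j}^2$, the diagonal Cauchy--Schwarz on $\sum_{i\ge 2}u_{ii}^2$, the double application of \eqref{eq-g}, and the bound \eqref{eq-k}. Your final bookkeeping step (keeping $2\sum_{i\ge2}u_{1i}^2$ and checking $(1-c)(|\nabla f|^2-f_1^2)\ge 0$) is just an algebraic rearrangement of the paper's step $\sum_{i\ge2}u_{i1}^2+c\,u_{11}^2\ge c\sum_j u_{1j}^2$, so there is nothing substantively different.
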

\begin{proof}
  Recall that $\L u=\Delta_Vu$. From the Bochner formula
  \[
  \frac{1}{2}\Delta_V|\nabla u|^2=|\Hess u|^2+\Ric_V(\nabla u,\nabla u)+\langle \nabla u,\nabla \Delta_Vu\rangle_g
  \]
  and the fact 
  \[
  \frac{1}{2}\Delta_V|\nabla u|^2=|\nabla |\nabla u||^2+|\nabla u|\Delta_V|\nabla u|,
  \]
  combined with $u$ is a solution of \eqref{eq-h}, we have
  \begin{equation}\label{eq-i}
    \begin{split}
      |\nabla u|\Delta_V|\nabla u|
      =|\Hess u|^2+\Ric_V(\nabla u,\nabla u)-\mu |\nabla u|^2-|\nabla |\nabla u||^2.
    \end{split}
  \end{equation}
  Proceeding, given $p\in M$, choose the normal coordinates around $p$ so that $u_i(p)=0$ for $2\le i\le n$ and $u_1(p)=|\nabla u|(p)$, and then
  \[
  |\nabla|\nabla u||^2=|\nabla u_1|^2=\sum_{j=1}^nu_{1j}^2.
  \]
  Therefore,
  \begin{align*}
    |\Hess u|^2-|\nabla |\nabla u||^2
    =&\sum_{i,j=1}^nu_{ij}^2-\sum_{j=1}^nu_{1j}^2\\
    =&\sum_{i=2}^n\sum_{j=1}^nu_{ij}^2\\
    \ge &\sum_{i=2}^nu_{i1}^2+\sum_{i=2}^nu_{ii}^2\\
    \ge &\sum_{i=2}^nu_{i1}^2+\frac{1}{n-1}\left(\sum_{i=2}^nu_{ii}\right)^2\\
    =&\sum_{i=2}^nu_{i1}^2+\frac{1}{n-1}(\mu u+u_{11}+V^1u_1).
  \end{align*}
  By using \eqref{eq-g} twice, for any $\alpha >0$ and $\beta >0$, we have
  \begin{align*}
    \mu u+u_{11}+V^1u_1
    \ge &\frac{(\mu u+u_{11})^2}{1+\alpha }-\frac{(V^1u_1)^2}{\alpha }\\
    \ge &\frac{1}{1+\alpha }\left(\frac{u_{11}^2}{1+\beta }-\frac{(\mu u)^2}{\beta }\right)-\frac{(V^1u_1)^2}{\alpha }\\
    =&\frac{u_{11}^2}{(1+\alpha )(1+\beta )}-\frac{(\mu u)^2}{\beta (1+\alpha )}-\frac{1}{\alpha }\langle V,\nabla u\rangle_g^2.
  \end{align*}
  It follows that
  \begin{align*}
    &|\Hess u|^2-|\nabla |\nabla u||^2\\
    \ge &\sum_{i=2}^nu_{i1}^2+\frac{u_{11}^2}{(1+\alpha )(1+\beta )(n-1)}-\frac{(\mu u)^2}{\beta (1+\alpha )(n-1)}-\frac{1}{\alpha (n-1)}\langle V,\nabla u\rangle_g^2\\
    \ge &\frac{|\nabla |\nabla u||^2}{(1+\alpha )(1+\beta )(n-1)}-\frac{(\mu u)^2}{\beta (1+\alpha )(n-1)}-\frac{|V|^2}{\alpha (n-1)}|\nabla u|^2
  \end{align*}
  Substitute this inequality and \eqref{eq-k} into \eqref{eq-i}, combine the conditions of the lemma, and note that $\L=\Delta_V$ and $V=\X^T$, we complete the proof.
\end{proof}

\begin{proof}[Proof of Theorem~\ref{thm-a}]
  We define $\phi :=|\nabla \ln u|=\frac{|\nabla u|}{u}$. Then we have
  \[
  \nabla \phi =\frac{\nabla |\nabla u|}{u}-\frac{|\nabla u|\nabla u}{u^2}.
  \]
  From the define of $\phi $ we also have
  \begin{align*}
    \Delta_V|\nabla u|
    =&\Delta_V(u\phi )\\
    =&u\Delta_V\phi +2\langle \nabla u,\nabla \phi \rangle_g+\phi \Delta_Vu\\
    =&u\Delta_V\phi +2\langle \nabla u,\nabla \phi \rangle_g-\mu |\nabla u|.
  \end{align*}
  By Lemma~\ref{lem-a}, for any $\alpha >0$ and $\beta >0$, at any point where $|\nabla u|\ne 0$,
  \begin{align*}
    \Delta_V\phi =&\frac{\Delta_V|\nabla u|}{u}-\frac{2\langle \nabla u,\nabla \phi \rangle_g}{u}+\mu \phi \\
    \ge &\frac{1}{u|\nabla u|}\bigg \{\frac{|\nabla |\nabla u||^2}{(1+\alpha )(1+\beta )(n-1)}-\frac{(\mu u)^2}{\beta (1+\alpha )(n-1)}\\
    &-\left((n-1)K_0+B_0^2+\frac{\lambda_0 }{2}+\mu +\frac{V_0^2}{\alpha (n-1)}\right)|\nabla u|^2\bigg \}-\frac{2\langle \nabla u,\nabla \phi \rangle_g}{u}+\mu \phi \\
    =&\frac{|\nabla |\nabla u||^2}{(1+\alpha )(1+\beta )(n-1)u|\nabla u|}-\frac{\mu^2}{\beta (1+\alpha )(n-1)\phi }\\
    &-\left((n-1)K_0+B_0^2+\frac{V_0^2}{\alpha (n-1)}+\frac{\lambda_0 }{2}\right)\phi -\frac{2\langle \nabla u,\nabla \phi \rangle_g}{u}
  \end{align*}
  As in \cite[P. 19]{Schoen1994}, we have for any $\epsilon >0$,
  \begin{align*}
    -\frac{2\langle \nabla u,\nabla \phi \rangle_g}{u}
    =&-(2-\epsilon )\frac{\langle \nabla u,\nabla \phi \rangle_g}{u}-\epsilon \frac{\langle \nabla u,\nabla \phi \rangle_g}{u}\\
    =&-(2-\epsilon )\frac{\langle \nabla u,\nabla \phi \rangle_g}{u}-\epsilon \frac{\langle \nabla |\nabla u|,\nabla u\rangle_g}{u^2}+\epsilon \frac{|\nabla u|^3}{u^3}\\
    \ge &-(2-\epsilon )\frac{\langle \nabla u,\nabla \phi \rangle_g}{u}-\epsilon \frac{|\nabla |\nabla u||\cdot |\nabla u|}{u^2}+\epsilon \frac{|\nabla u|^3}{u^3}\\
    \ge &-(2-\epsilon )\frac{\langle \nabla u,\nabla \phi \rangle_g}{u}-\frac{\epsilon }{2}\left(\frac{|\nabla |\nabla u||^2}{u|\nabla u|}+\frac{|\nabla u|^3}{u^3}\right)+\epsilon \frac{|\nabla u|^3}{u^3}\\
    =&-(2-\epsilon )\frac{\langle \nabla u,\nabla \phi \rangle_g}{u}-\frac{\epsilon }{2}\frac{|\nabla |\nabla u||^2}{u|\nabla u|}+\frac{\epsilon }{2}\phi^3.
  \end{align*}
  Taking $\epsilon =\frac{2}{(1+\alpha )(1+\beta )(n-1)}$, we arrive at
  \begin{equation}\label{eq-m}
    \begin{split}
      \Delta_V\phi 
      \ge &-\frac{\mu^2}{\beta (1+\alpha )(n-1)\phi }-\left((n-1)K_0+B_0^2+\frac{V_0^2}{\alpha (n-1)}+\frac{\lambda_0 }{2}\right)\phi \\
      &-\left(2-\frac{2}{(1+\alpha )(1+\beta )(n-1)}\right)\frac{\langle \nabla u,\nabla \phi \rangle_g}{u}+\frac{\phi^3}{(1+\alpha )(1+\beta )(n-1)}.
    \end{split}
  \end{equation}
  Given $a>0$, define a function $F$ in $B_a(p)$ as follows:
  \[
  F(x)=(a^2-r^2(x))\phi(x)=(a^2-r^2)\frac{|\nabla u|}{u},
  \]
  where $r(x)=\mathrm{dist}(x,p)$ is the distance function from $p$. Without loss of generality, assume $|\nabla u|\ne 0$. Since $F|_{\partial B_a(p)}=0$ and $F>0$, it attains its maximum at some point $x_1\in B_a(p)$. The argument of E. Calabi \cite{Calabi1958} allows us to assume that $x_1$ is not a cut point of $p$. Hence $F$ is smooth near $x_1$.

  By the maximum principle, we have at $x_1$,
  \begin{equation}\label{eq-n}
    0=\nabla F=-\phi \nabla (r^2)+(a^2-r^2)\nabla \phi 
  \end{equation}
  and
  \begin{equation}\label{eq-o}
    0\ge \Delta_VF=(a^2-r^2)\Delta_V\phi -\phi \Delta_V(r^2)-2\langle \nabla \phi ,\nabla (r^2)\rangle_g.
  \end{equation}
  From \eqref{eq-n}, we have
  \begin{equation}\label{eq-p}
    \frac{\nabla \phi }{\phi }=\frac{\nabla (r^2)}{a^2-r^2},
  \end{equation}
  and then \eqref{eq-o} reduces to
  \begin{equation}\label{eq-q}
    \begin{split}
      0\ge &\frac{\Delta_V\phi }{\phi }-\frac{\Delta_V(r^2)}{a^2-r^2}-\frac{2|\nabla (r^2)|^2}{(a^2-r^2)^2}\\
      =&\frac{\Delta_V\phi }{\phi }-\frac{\Delta (r^2)}{a^2-r^2}-\frac{\langle V,\nabla(r^2)\rangle_g}{a^2-r^2}-\frac{8r^2}{(a^2-r^2)^2}.
    \end{split}
  \end{equation}
  Since the Ricci curvature of $M$ satisfies \eqref{eq-k1}, the classical Laplacian comparison theorem gives
  \begin{align*}
    \Delta (r^2)
    =&2r\Delta r+2\\
    \le &2r\left(\frac{n-1}{r}+(n-1)\sqrt{K_0+\frac{2B_0^2}{n-1}}\right)+2\\
    \le &2n+2(n-1)\sqrt{K_0+\frac{2B_0^2}{n-1}}a.
  \end{align*}
  Therefore, it follows from \eqref{eq-q} that
  \begin{equation}\label{eq-r}
    \begin{split}
      0\ge \frac{\Delta_V\phi }{\phi }-\frac{1}{a^2-r^2}\left(2n+2(n-1)\sqrt{K_0+\frac{2B_0^2}{n-1}}a+2V_0r\right)-\frac{8r^2}{(a^2-r^2)^2}.
    \end{split}
  \end{equation}
  By combining inequality \eqref{eq-m} and \eqref{eq-r}, we obtain
  \begin{align*}
    0\ge &-\frac{\mu^2}{\beta (1+\alpha )(n-1)}\phi^{-2}-\left((n-1)K_0+B_0^2+\frac{V_0^2}{\alpha (n-1)}+\frac{\lambda_0 }{2}\right)\\
    &-\frac{1}{a^2-r^2}\left(2n+2(n-1)\sqrt{K_0+\frac{2B_0^2}{n-1}}a+2V_0r\right)-\frac{8r^2}{(a^2-r^2)^2}\\
    &-\left(2-\frac{2}{(1+\alpha )(1+\beta )(n-1)}\right)\frac{\langle \nabla u,\nabla \phi \rangle_g}{u\phi }+\frac{\phi^2}{(1+\alpha )(1+\beta )(n-1)}.
  \end{align*}
  By the Cauchy--Schwarz inequality and equality \eqref{eq-p}, we deduce
  \[
  -\frac{\langle \nabla u,\nabla \phi \rangle_g}{u\phi }=-\frac{2r\langle \nabla u,\nabla r\rangle_g}{u(a^2-r^2)}\ge -\frac{2r}{a^2-r^2}\phi .
  \]
  We thus get
  \begin{align*}
    0\ge &-\frac{\mu^2}{\beta (1+\alpha )(n-1)}\phi^{-2}-\left((n-1)K_0+B_0^2+\frac{V_0^2}{\alpha (n-1)}+\frac{\lambda_0 }{2}\right)\\
    &-\frac{1}{a^2-r^2}\left(2n+2(n-1)\sqrt{K_0+\frac{2B_0^2}{n-1}}a+2V_0r\right)-\frac{8r^2}{(a^2-r^2)^2}\\
    &-4\left(1-\frac{1}{(1+\alpha )(1+\beta )(n-1)}\right)\frac{r}{a^2-r^2}\phi +\frac{\phi^2}{(1+\alpha )(1+\beta )(n-1)}.
  \end{align*}
  Multiplying through by $(a^2-r^2)^4\phi^2$, and noticing that $0\le r\le a$, we arrive at
  \begin{equation}\label{eq-s}
    \begin{split}
      0\ge &\frac{1}{(1+\alpha )(1+\beta )(n-1)}F^4\\
      &-4a\left(1-\frac{1}{(1+\alpha )(1+\beta )(n-1)}\right)F^3-C_1F^2-\frac{\mu^2a^8}{\beta (1+\alpha )(n-1)},
    \end{split}
  \end{equation}
  where 
  \begin{align*}
    C_1=&\left((n-1)K_0+B_0^2+\frac{V_0^2}{\alpha (n-1)}+\frac{\lambda_0 }{2}\right)a^4\\
    &+2\left(n+4+(n-1)\sqrt{K_0+\frac{2B_0^2}{n-1}}a+V_0a\right)a^2.
  \end{align*}
  Consider the corresponding polynomial of the right hand side of \eqref{eq-s}
  \begin{align*}
    P(y)=&\frac{1}{(1+\alpha )(1+\beta )(n-1)}y^4\\
      &-4a\left(1-\frac{1}{(1+\alpha )(1+\beta )(n-1)}\right)y^3-C_1y^2-\frac{\mu^2a^8}{\beta (1+\alpha )(n-1)}.
  \end{align*}
  We see that $P(y)$ is a quartic polynomial with a positive leading coefficient and negative constant term, so Vieta's formulas tell us that it has exactly a positive real root and a negative real root. We fix the positive numbers $\alpha $ and $\beta $, there is a positive constant $C_2$ depending on $n,\mu , a, K_0, B_0, V_0$ and $\lambda_0$, such that $F(x_1)=\sup\limits_{B_a(p)}F\le C_2$. Therefore, in $B_{a/2}(p)$, we have 
  \[
  \frac{3a^2}{4}\sup_{B_{a/2}(p)}\frac{|\nabla u|}{u}\le C_2.
  \] 
  This completes the proof of the first part of the theorem.
  
  Now we want to make our estimate more specific. By the AM--GM inequality, we have for any $\delta \in (0,1)$,
  \begin{align*}
    &-4a\left(1-\frac{1}{(1+\alpha )(1+\beta )(n-1)}\right)F^3\\
    \ge &-\frac{\delta }{(1+\alpha )(1+\beta )(n-1)}F^4\\
    &-\frac{4}{\delta }(1+\alpha )(1+\beta )(n-1)\left(1-\frac{1}{(1+\alpha )(1+\beta )(n-1)}\right)^2a^2F^2.
  \end{align*}
  Hence \eqref{eq-s} reduces to
  \begin{equation}\label{eq-t}
    0\ge \frac{1-\delta }{(1+\alpha )(1+\beta )(n-1)}F^4-\Lambda F^2-\frac{\mu^2a^8}{\beta (1+\alpha )(n-1)},
  \end{equation}
  where $\Lambda =\Lambda_0a^4+\Lambda_1a^3+\Lambda_2a^2$ with
  \begin{align*}
    \Lambda_0=&(n-1)K_0+B_0^2+\frac{V_0^2}{\alpha (n-1)}+\frac{\lambda_0}{2},\\
    \Lambda_1=&2(n-1)\sqrt{K_0+\frac{2B_0^2}{n-1}}+2V_0\ge 0,\\
    \Lambda_2=&2n+8+\frac{4}{\delta }(1+\alpha )(1+\beta )(n-1)\left(1-\frac{1}{(1+\alpha )(1+\beta )(n-1)}\right)^2>0.
  \end{align*}
  Notice that $\Lambda $ is not necessarily positive, because $\Lambda_0$ can be negative and its absolute value is large enough. For a positive number $\tilde a$ and a nonnegative number $\tilde c$, from the quadratic inequality $\tilde ay^2-\tilde by-\tilde c\le 0$ we have $y\le \frac{\tilde b+\sqrt{{\tilde b}^2+4\tilde a\tilde c}}{2\tilde a}$. So from inequality \eqref{eq-t} we get
  \begin{equation*}
    \begin{split}
      (F(x_1))^2
      \le &\frac{(1+\alpha )(1+\beta )(n-1)}{2(1-\delta )}\left\{\Lambda +\left(\Lambda ^2+\frac{4(1-\delta )\mu^2a^8}{\beta (1+\beta )(1+\alpha )^2(n-1)^2}\right)^{\frac{1}{2}}\right\}\\
      \le &\frac{(1+\alpha )(1+\beta )(n-1)}{2(1-\delta )}(\Lambda +|\Lambda |)+\sqrt{\frac{1+\beta }{\beta (1-\delta )}}|\mu |a^4,
    \end{split}
  \end{equation*}
  where we used the simple fact that $\sqrt{x^2+y^2}\le |x|+|y|$ in the second inequality. It follows that
  \begin{equation*}
    \begin{split}
      \frac{9a^4}{16}\sup_{B_{a/2}(p)}\frac{|\nabla u|^2}{u^2}
      \le &\frac{(1+\alpha )(1+\beta )(n-1)}{2(1-\delta )}(\Lambda +|\Lambda |)+\sqrt{\frac{1+\beta }{\beta (1-\delta )}}|\mu |a^4\\
      \le &\frac{(1+\alpha )(1+\beta )(n-1)}{2(1-\delta )}\left((\Lambda_0+|\Lambda_0|)a^4+2\Lambda_1a^3+2\Lambda_2a^2\right)\\
      &+\sqrt{\frac{1+\beta }{\beta (1-\delta )}}|\mu |a^4.
    \end{split} 
  \end{equation*}
  Therefore, by fixing $\beta $ and $\delta $, there is a positive constant $\widetilde{C}$ depending on $n$ and $\alpha $, such that \eqref{eq-v} holds.
\end{proof}

\begin{proof}[Proof of Corollary~\ref{cor-b}]
  Choose $x_1,x_2\in B_{a/2}(p)$ such that $u(x_1)=\sup_{B_{a/2}(p)}u$ and $u(x_2)=\inf_{B_{a/2}(p)}u$. Let $\gamma $ be a shortest geodesic connecting $x_1$ and $x_2$, and the triangle inequality implies $\gamma \subset B_a(p)$. In $B_a(p)$, there is also a gradient inequality such as \eqref{eq-w}, and integrating it over $\gamma $ yields
  \begin{align*}
    \int_\gamma \frac{|\nabla u|}{u}\dd s\le C\int_\gamma \dd s\le 2Ca.
  \end{align*}
  It follows that
  \[
  \ln \frac{u(x_1)}{u(x_2)}\le \int_\gamma \frac{|\nabla u|}{u}\dd s\le 2Ca.
  \]
  Hence we obtain $u(x_1)\le \widehat{C}u(x_2)$, where $\widehat{C}=e^{2Ca}$ is a constant depending on $n,\mu , a, K_0, B_0, V_0$ and $\lambda_0$.
\end{proof}



\begin{thebibliography}{1}

\bibitem{Calabi1958}
E.~Calabi.
\newblock An extension of {E}. {H}opf's maximum principle with an application
  to {R}iemannian geometry.
\newblock {\em Duke Math. J.}, 25:45--56, 1958.

\bibitem{Chen2012}
Q.~Chen, J.~Jost, and H.~Qiu.
\newblock Existence and {L}iouville theorems for {$V$}-harmonic maps from
  complete manifolds.
\newblock {\em Ann. Global Anal. Geom.}, 42(4):565--584, 2012.

\bibitem{Chen2015}
Q.~Chen, J.~Jost, and G.~Wang.
\newblock A maximum principle for generalizations of harmonic maps in
  {H}ermitian, affine, {W}eyl, and {F}insler geometry.
\newblock {\em J. Geom. Anal.}, 25(4):2407--2426, 2015.

\bibitem{Li2021}
X.~Li and J.~Sun.
\newblock Gradient estimate for the positive solutions of {$\mathcal Lu = 0$}
  and {$\mathcal Lu =\frac{\partial u}{\partial t}$} on conformal solitons.
\newblock {\em Acta Math. Sin. (Engl. Ser.)}, 37(11):1768--1782, 2021.

\bibitem{Schoen1994}
R.~M. Schoen and S.-T. Yau.
\newblock {\em Lectures on differential geometry}.
\newblock International press, 1994.

\bibitem{Smoczyk2001}
K.~Smoczyk.
\newblock A relation between mean curvature flow solitons and minimal
  submanifolds.
\newblock {\em Math. Nachr.}, 229:175--186, 2001.

\bibitem{Wei2009}
G.~Wei and W.~Wylie.
\newblock Comparison geometry for the {B}akry-{E}mery {R}icci tensor.
\newblock {\em J. Differential Geom.}, 83(2):377--405, 2009.

\bibitem{Zhu2018}
Y.~Zhu and Q.~Chen.
\newblock Gradient estimates for the positive solutions of {$\mathcal {L}u=0$}
  on self-shrinkers.
\newblock {\em Mediterr. J. Math.}, 15(1):Paper No. 28, 14, 2018.

\end{thebibliography}
\end{document}